\newtheorem{thm}{Theorem}[section]
\newtheorem*{tha}{Theorem A}
\newcommand{\C}{{\mathbb C}}
\newcommand{\D}{{\mathbb D}}
\newcommand{\T}{{\mathbb T}}
\newcommand{\f}{\frac}
\newcommand{\al}{\alpha}
\newcommand{\la}{\lambda}
\newcommand{\ze}{\zeta}
\renewcommand{\th}{\theta}
\newcommand{\Om}{\Omega}
\numberwithin{equation}{section}
\title[On the singular factor of a linear combination]
{On the singular factor of a linear combination\\
of holomorphic functions\\
\quad\\
{\it Sur le facteur singulier d'une combinaison lin\'eaire\\
de fonctions holomorphes}}
\author{Konstantin M. Dyakonov}
\address{ICREA and Universitat de Barcelona, Departament de Matem\`atica 
Aplicada i An\`alisi, Gran Via 585, E-08007 Barcelona, Spain}
\email{konstantin.dyakonov@icrea.cat}
\keywords{Holomorphic functions, Hardy--Sobolev spaces, singular inner factors, Wronskian} 
\subjclass[2000]{30D50, 30D55, 46E15.} 
\thanks{Supported in part by grant MTM2011-27932-C02-01 from El Ministerio de Ciencia 
e Innovaci\'on (Spain) and grant 2009-SGR-1303 from AGAUR (Generalitat de Catalunya).}
\begin{document}
\begin{abstract}
We prove that the linear combinations of functions $f_0,\dots,f_n\in H^\infty$ have \lq\lq few" 
singular inner factors, provided that the $f_j$'s are suitably smooth up to the boundary, while 
in general this is no longer true. 

\bigskip

\noindent{\bf R\'esum\'e.} On d\'emontre que les combinaisons lin\'eaires des fonctions 
$f_0,\dots,f_n\in H^\infty$ poss\`edent \lq\lq peu" de facteurs singuliers, \`a condition 
que les $f_j$ soient suffisamment lisses jusqu'au bord, mais que ceci n'est pas vrai dans 
le cas g\'en\'eral. 
\end{abstract}

\maketitle

\section{Introduction}

The classical uniqueness theorem tells us that, given a non-null holomorphic function $f$ on a domain 
$\Om\subset\C$, the zero set of $f$ is discrete (i.\,e., has no accumulation points in $\Om$). 
This result admits an amusing extension to linear combinations of several functions. 

\begin{tha} Suppose $f_0,\dots,f_n$ are linearly independent holomorphic functions on a domain 
$\Om\subset\C$. Then there is a discrete subset $\mathcal E$ of $\Om$ with the following property: 
whenever $g$ is a nontrivial linear combination of $f_0,\dots,f_n$, the zeros of $g$ whose multiplicity 
exceeds $n$ are all contained in $\mathcal E$. 
\end{tha}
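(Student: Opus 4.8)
The plan is to let $\mathcal E$ be the zero set of the \emph{Wronskian} of $f_0,\dots,f_n$. Recall that this Wronskian is the holomorphic function
\[
W(z)=\det\bigl(f_j^{(i)}(z)\bigr)_{0\le i,j\le n},
\]
the $(n+1)\times(n+1)$ determinant whose $j$-th column is $\bigl(f_j(z),f_j'(z),\dots,f_j^{(n)}(z)\bigr)^{T}$. Since $W$ is itself holomorphic on $\Om$, as soon as we know that $W\not\equiv 0$ its zero set will automatically be discrete (here connectedness of $\Om$ is used), and this is the set $\mathcal E$ I would propose.

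First I would establish that $W\not\equiv 0$. This is exactly the classical Wronskian criterion for linear independence: for holomorphic functions on a connected domain, $f_0,\dots,f_n$ are linearly independent over $\C$ if and only if $W\not\equiv 0$. The direction we need here — linear independence forcing $W$ to be non-null — is where the hypothesis that the $f_j$ are holomorphic (rather than merely smooth) genuinely enters, and I expect this to be the main, least elementary ingredient of the argument; granting it, $\mathcal E:=\{z\in\Om:W(z)=0\}$ is indeed a discrete subset of $\Om$.

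The remaining step is a short piece of linear algebra, and it is where the multiplicity threshold $n$ plays its role. Let $g=\sum_{j=0}^{n}c_jf_j$ be a nontrivial linear combination, so that the coefficient vector $\mathbf c=(c_0,\dots,c_n)^{T}$ is nonzero, and suppose $z_0\in\Om$ is a zero of $g$ of multiplicity exceeding $n$, i.e.\ $g(z_0)=g'(z_0)=\cdots=g^{(n)}(z_0)=0$. Differentiating the identity $g=\sum_j c_jf_j$ termwise gives $g^{(i)}=\sum_j c_jf_j^{(i)}$ for each $0\le i\le n$; evaluating at $z_0$ shows that the matrix $M:=\bigl(f_j^{(i)}(z_0)\bigr)_{0\le i,j\le n}$ satisfies $M\mathbf c=0$. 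Since $\mathbf c\neq 0$, the matrix $M$ is singular, whence $W(z_0)=\det M=0$ and $z_0\in\mathcal E$. Thus every zero of $g$ of multiplicity greater than $n$ lies in $\mathcal E$, as required. The only delicate point is the Wronskian criterion invoked above; everything else is formal, and one should note that the number $n$ is precisely what is needed to fill the $n+1$ rows of $M$ with the vanishing derivatives $g,g',\dots,g^{(n)}$.
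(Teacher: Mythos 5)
Your proposal is correct and follows essentially the same route as the paper: both take $\mathcal E$ to be the zero set of the Wronskian $W(f_0,\dots,f_n)$, invoke the fact that linear independence of holomorphic functions forces $W\not\equiv 0$ (hence a discrete zero set), and finish with the same linear-algebra observation that a zero of multiplicity exceeding $n$ for a nontrivial combination makes the Wronskian matrix singular. No gaps; the paper likewise cites the Wronskian criterion without further proof.
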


\par Thus, \lq\lq deep" zeros (i.e., those of multiplicity greater than $n$) are forbidden for a non-null 
linear combination $\sum_{j=0}^n\la_jf_j$ except on a \lq\lq thin" set, which depends only on the 
$f_j$'s but not on the $\la_j$'s. Needless to say, the assumption on the multiplicities can be 
neither dropped nor relaxed. Furthermore, the exceptional \lq\lq thin" sets that arise are the same as 
those in the classical uniqueness theorem, which corresponds to the case $n=0$. 

\par We strongly suspect that Theorem A must be known. However, having found no appropriate reference, 
we now give a quick proof. 
\par Let $W=W(f_0,\dots,f_n)$ denote the {\it Wronskian} of the functions $f_j$, so that 
\begin{equation}\label{eqn:wro}
W(f_0,\dots,f_n):=
\begin{vmatrix}
f_0&f_1&\dots&f_n\\
f'_0&f'_1&\dots&f'_n\\
\dots&\dots&\dots&\dots\\
f_0^{(n)}&f_1^{(n)}&\dots&f_n^{(n)}
\end{vmatrix}.
\end{equation}
The $f_j$'s being linearly independent {\it and holomorphic}, it follows that $W$ is non-null 
(and also holomorphic, of course), so its zeros form a discrete subset, say $\mathcal E$, of $\Om$. 
Now, a point $z\in\Om$ will be a zero of multiplicity at least $n+1$ for $\sum_{j=0}^n\la_jf_j$ 
if and only if 
\begin{equation}\label{eqn:sys}
\sum_{j=0}^n\la_jf_j^{(k)}(z)=0\qquad(k=0,\dots,n). 
\end{equation}
The condition for \eqref{eqn:sys} to have a nontrivial solution $(\la_0,\dots,\la_n)$ is that 
the Wronskian matrix $\{f_j^{(k)}(z)\}_{j,k=0}^n$ be singular, i.\,e., that $W(z)=0$. The points 
$z$ in question are, therefore, precisely those in $\mathcal E$. 

\par This proof actually shows that the \lq\lq deep" zeros of {\it all} the nontrivial linear 
combinations as above coincide with the zeros of a {\it single} holomorphic function, namely, of $W$. 

\par In what follows, the domain $\Om$ will be the disk $\D:=\{z\in\C:|z|<1\}$. 
We shall be concerned with functions in the {\it Hardy spaces} $H^p=H^p(\D)$ 
(see, e.\,g., \cite{G}) and in the {\it Hardy--Sobolev spaces} 
$$H^p_k:=\{f\in H^p:\,f^{(k)}\in H^p\},$$ 
where $p>0$ and $k$ is a positive integer. We remark that if the functions $f_0,\dots,f_n$ from Theorem A 
are taken to be in $H^p_n$, then their Wronskian $W$ is in $H^q$ with a suitable $q$ (in case $p\ge1$, 
this is true with $q=p$). The zero set $\mathcal E=W^{-1}(0)$ therefore satisfies the Blaschke 
condition $\sum_{z\in\mathcal E}(1-|z|)<\infty$, and we may rephrase Theorem A as saying that there 
exists a Blaschke product (the one built from $\mathcal E$) with a certain divisibility property. 

\par The purpose of this note is to prove an analog of the above result for {\it singular inner factors}. 
Recall that any nontrivial function $f\in H^p$ can be factored canonically as $f=FBS$, where $F$ is an 
outer function, $B$ a Blaschke product, and $S$ a singular inner function (see \cite[Chapter II]{G}). 
The last-mentioned factor is of the form 
$$S(z)=S_\mu(z):=\exp\left\{-\int_\T\f{\ze+z}{\ze-z}
d\mu(\ze)\right\},$$
where $\mu$ is a (positive) singular measure on the circle $\T:=\partial\D$. While the Blaschke product 
$B$ is determined by the zeros of $f$ in $\D$, the role of the singular factor $S$ is not so easily 
describable. In a sense, however, such factors can be thought of as responsible for the \lq\lq boundary 
zeros of infinite multiplicity". 

\par We prove then, in the spirit of Theorem A, that if $f_0,\dots,f_n$ satisfy the hypothesis of that 
theorem (with $\Om=\D$) and are suitably smooth up to $\T$, then there is a {\it single} singular inner 
function $S$ divisible by the singular factor of each nontrivial linear combination $\sum_{j=0}^n\la_jf_j$. 
This means that the totality of singular factors resulting from such linear combinations is rather poor. 
On the other hand, we give an example to the effect that the smoothness assumption is indispensable; 
in fact, it is not enough to assume that the $f_j$'s are merely in $H^\infty$. 

\section{Main result} 

\begin{thm}\label{thm:maintheorem} Let $f_0,\dots,f_n$ be linearly independent functions in $H^1_n$. 
Then there is a singular inner function $S$ with the following property: whenever $\la_0,\dots,\la_n$ 
are complex numbers with $\sum_{j=0}^n|\la_j|>0$, the singular factor of $\sum_{j=0}^n\la_jf_j$ divides $S$. 
\end{thm}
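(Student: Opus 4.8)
The plan is to let $S$ be the singular inner factor of the Wronskian $W=W(f_0,\dots,f_n)$ defined in \eqref{eqn:wro}, in exact analogy with the role $W$ played in the proof of Theorem A. Since the $f_j$ are linearly independent and holomorphic, $W$ is non-null; and since $f_j\in H^1_n$ forces $f_j^{(l)}\in H^1\subset N^+$ for every $l\le n$, the Wronskian $W$---a sum of products of such entries---belongs to the Smirnov class $N^+$ and so admits a canonical factorization. I set $S:=S_W$, the singular inner factor of $W$, and note that it depends only on the $f_j$'s, not on the $\la_j$'s. The goal is then to show that the singular factor $S_g$ of $g$ divides $S$ for every nontrivial combination $g=\sum_{j=0}^n\la_jf_j$.

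Fix such a $g$ and choose $k$ with $\la_k\neq0$. Multilinearity of the determinant in its $k$-th column gives the identity
\begin{equation}\label{eqn:multilin}
W(f_0,\dots,f_{k-1},g,f_{k+1},\dots,f_n)=\la_k\,W,
\end{equation}
since replacing the $k$-th column by $(g,g',\dots,g^{(n)})^{\top}=\sum_j\la_j(f_j,\dots,f_j^{(n)})^{\top}$ annihilates every term with a repeated column. Expanding the left-hand determinant along that same column yields
\begin{equation}\label{eqn:laplace}
\la_k\,W=\sum_{i=0}^n(-1)^{i+k}\,g^{(i)}\,M_i,
\end{equation}
where each minor $M_i$ is a determinant formed from the entries $f_j^{(l)}$ with $j\neq k$ and $l\neq i$; in particular every $M_i$ lies in $N^+$.

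The crux, and the step I expect to be the main obstacle, is the following differentiation lemma: if $h\in H^1_1$ then the singular inner factor of $h$ divides that of $h'$, equivalently $h'/S_h\in N^+$. Granting this and applying it successively to $h=g,g',\dots,g^{(n-1)}$ (each of which lies in a Hardy--Sobolev space because $g\in H^1_n$), one obtains $S_g\mid S_{g^{(i)}}$, hence $g^{(i)}/S_g\in N^+$, for every $i\le n$. Multiplying by the $N^+$ minors $M_i$ and summing, \eqref{eqn:laplace} shows that $\la_k\,W/S_g\in N^+$, whence $W/S_g\in N^+$; by the standard divisibility calculus in the Smirnov class this says precisely that the singular inner function $S_g$ divides the inner factor of $W$, and therefore (being coprime to its Blaschke part) $S_g\mid S_W=S$, as required. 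All the weight thus falls on the differentiation lemma: writing $h=S_h u$ with $u\in N^+$ one has $h'=S_h\bigl(u'+(S_h'/S_h)\,u\bigr)$, and the task is to verify that the bracketed factor is again of Smirnov type even though $S_h'/S_h=-\int_\T 2\ze(\ze-z)^{-2}\,d\mu(\ze)$ carries strong boundary singularities on $\operatorname{supp}\mu$; controlling this quotient, using that $h'\in H^1$, is where the real analysis lies.
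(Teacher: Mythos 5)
Your proposal is correct and coincides with the paper's own proof in every essential step: you take $S=S_W$ for the Wronskian $W$, use multilinearity to get $W(f_0,\dots,g,\dots,f_n)=\la_k W$, expand along the $k$th column with cofactors of Smirnov type, and reduce everything to the lemma that the singular factor of $h\in H^1_1$ divides that of $h'$. The one step you leave open and flag as the main obstacle is in fact a known theorem of Caughran \cite{Cau} and Caughran--Shields \cite{CS}, which the paper simply cites, so no further analysis of $S_h'/S_h$ is needed.
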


\begin{proof} Let 
\begin{equation}\label{eqn:ggg} 
g=\sum_{j=0}^n\la_jf_j
\end{equation}
be a nontrivial linear combination of the $f_j$'s, so that at least one of the coefficients (say, $\la_k$) is 
nonzero. Recalling the notation \eqref{eqn:wro} for the Wronskian, put 
$$W:=W(f_0,\dots,f_n)$$ 
and 
$$W_k:=W(f_0,\dots,f_{k-1},g,f_{k+1},\dots,f_n).$$ 
It should be noted that $W\not\equiv0$, because the $f_j$'s are linearly independent holomorphic functions, 
and also that $W\in H^1$. To verify the latter claim, expand the determinant \eqref{eqn:wro} along its last 
row and observe that the derivatives $f_j^{(k)}$ with $0\le k\le n-1$ are all in $H^\infty$. Furthermore, it 
follows from \eqref{eqn:ggg} that $W_k=\la_kW$. In particular, the inner factors of $W$ and $W_k$ are 
identical. 
\par Now, if $S_g$ is the singular factor of $g$, then the inner factors of $g',g'',\dots,g^{(n)}$ are all 
divisible by $S_g$. (Indeed, it is known that for every $h\in H^1_1$, the singular factor of $h$ divides that 
of $h'$; see \cite[Theorem 1]{Cau} or \cite[Lemma 2]{CS}.) Expanding the determinant $W_k$ along its $k$th 
column, $\left(g,g',\dots,g^{(n)}\right)^{T}$, and noting that the corresponding cofactors are in $H^1$, 
we conclude that $S_g$ divides the inner factor of $W_k$, or equivalently, of $W$. The function $S=S_W$, 
defined as the singular factor of $W$, has therefore the required property. 
\end{proof}

\section{An example} 

We borrow an idea from \cite{CS}. Let $\th$ be a nonconstant inner function that omits 
an uncountable set of values $E\subset\D$. (The existence of such a function with values 
in $\D\setminus E$, for any prescribed closed set $E$ of zero logarithmic capacity, 
is established in \cite[Chapter 2]{CL}.) For each $\al\in E$, one has 
\begin{equation}\label{eqn:tetal}
\th-\al=S_\al\cdot(1-\bar\al\th),
\end{equation}
with 
$$S_\al:=\f{\th-\al}{1-\bar\al\th}.$$ 
Here, $S_\al$ is a singular inner function (because $\al$ is not in the range of $\th$), while the other factor 
in \eqref{eqn:tetal} is outer. 
\par Write $\mu_\al$ for the singular measure associated with $S_\al$. For $\mu_\al$-almost every $\ze\in\T$, 
we have $S_\al(z)\to0$, and hence $\th(z)\to\al$, as $z\to\ze$ nontangentially; see \cite[Chapter II]{G}. It 
follows that the supports of $\mu_\al$'s, with $\al\in E$, are pairwise disjoint. The set $E$ being uncountable 
and the measures $\mu_\al$ nonzero, we readily deduce that no finite Borel measure $\mu$ on $\T$ can satisfy 
$\mu\ge\mu_\al$ for all $\al\in E$. Consequently, no singular inner function is divisible by every $S_\al$. 
\par Since $S_\al$ is the singular factor of $\th-\al$, which is a linear combination of $f_0=\th$ and $f_1=1$, 
we see that these $f_j$'s violate the conclusion of Theorem \ref{thm:maintheorem} with $n=1$. 

\section{Concluding remarks} 

(1) It would be interesting to know if the space $H^1_n$ in Theorem \ref{thm:maintheorem} can be replaced 
by a larger smoothness class (say, by $H^p_n$ with a $p<1$), and moreover, to find the optimal smoothness 
condition on the functions $f_j$. 

\smallskip

(2) The author's recent papers \cite{DCRM, DMA, DCONM} contain some related work, where smoothness properties 
of the Wronskian play a role when proving various extensions of the Mason--Stothers $abc$ theorem. 

\medskip

\end{document}